\newcommand{\id}{\, \mathrm{d}}
\newcommand{\ee}{\mathrm{e}}
\newcommand{\im}{\mathrm{i}}
\newcommand{\bO}{\mathcal{O}}
\def\ifrac#1#2{\textstyle{\frac{#1}{#2}}\displaystyle}
\def\Ai{{\rm Ai\,}}
\def \arccosh{{\rm arccosh}}
\def\hyp#1#2#3#4{\displaystyle{{}_2F_1\left({#1, #2\atop #3};#4\right)}}
\def\Ohyp#1#2#3#4{\displaystyle{{\bf F}\left({#1, #2\atop #3};#4\right)}}
\begin{document}

\title[Computation of the coefficients in uniform asymptotics]{Computation of the coefficients appearing in the uniform asymptotic expansions of integrals}

\author[S. Farid Khwaja and A. B. Olde Daalhuis]{Sarah Farid Khwaja and Adri B. Olde Daalhuis\thanks{Address for
correspondence: Dr A.\ B.\ Olde Daalhuis, School of Mathematics,
University of Edinburgh, Edinburgh, EH9 3FD; email:
A.OldeDaalhuis@ed.ac.uk}} \affil{University of Strathclyde, International Study Center, 50 Richmond Street, Glasgow, G1 1XP, UK}
\affil{Maxwell Institute and School of Mathematics, King's Buildings, The University of Edinburgh, Edinburgh  EH9 3FD,  UK}

\maketitle

\begin{abstract}
The coefficients that appear in uniform asymptotic expansions for integrals are typically very complicated. 
In the existing literature the majority of the work only give the first two coefficients. In a limited number of papers where more coefficients are given
the evaluation of the coefficients near the coalescence points is normally highly numerically unstable. In this paper, we illustrate
how well-known Cauchy type integral representations can be used to compute the coefficients in a very
stable and efficient manner. We discuss the cases: (i) two coalescing saddles, (ii) two saddles coalesce with two branch points,
(iii) a saddle point near an endpoint of the interval of integration. As a special case of (ii) we give a 
new uniform asymptotic
expansion for Jacobi polynomials $P_n^{(\alpha,\beta)}(z)$ in terms of Laguerre polynomials $L_n^{(\alpha)}(x)$ as
$n\to\infty$ that holds uniformly for $z$ near $1$. Several numerical illustrations are included.
\end{abstract}

\section{Introduction}
In this paper, we discuss approximations of integrals of the form
\begin{equation}\label{MainInt}
F(\lambda,z)=\int_{\mathcal{C}}\ee^{\lambda f(t,z)}g(t,z)\id t,
\end{equation}
where $\mathcal{C}$ is a contour in the complex plane and $\lambda$ is a large parameter.
The critical points for these type of integrals are saddle points of $f(t,z)$, branch points of the integrand, and possibly end points of the
contour of integration. These critical points will depend on the additional parameter $z$, and we assume that the $N$ relevant critical points
will coalesce when $z=z_*$. Via the so-called Bleistein method (see \cite{Blei66}), one can obtain large $\lambda$ asymptotic expansions that 
hold for $z$ in some neighbourhood of $z_*$. Typically, the coefficients in these uniform asymptotic expansions are very complicated,
and most publications only give the first two coefficients. For more than two decades, we already know that it is possible to obtain relatively
simple Cauchy-type integral representations for these coefficients, see for example \cite{OT94}. In a recent paper \cite{TW14},
the remarkable exponentially convergent properties of the trapezoidal rule for integrals is discussed. In this paper, we discuss how this numerical method can
be used to compute coefficients in the uniform asymptotic expansions. With these results we make the uniform asymptotic expansions useful
for the numerical evaluation of the integrals.

The main steps of the Bleistein method are given in section \ref{Sect1}. We note that the details in that section 
are not correct in all cases in which the Bleistein method can be used, but many cases are covered.
The first step is to bring the integral in canonical form, and afterwards a special integration by parts
will give us the uniform asymptotic expansion. With the method introduced in  \cite{OT94} we obtain the Cauchy-type integral representations for these coefficients,
and then we mention how the ideas of \cite{TW14} can be used to compute the coefficients numerically is a stable and efficient manner.

The most well-known case of uniform asymptotics for integrals is the coalescence of two saddle points. We give the details in section \ref{Sect2}
and include as a numerical example the coefficients of the well-known uniform asymptotic expansion of the Bessel function
$J_{\lambda}\left(\lambda z\right)$, as $\lambda\to\infty$ and $z$ near $1$. In that numerical illustration, we do not only observe that our method works,
but also that exact representation of the coefficients is highly numerically unstable near the coalescence.

In section \ref{Sect3} we give full details for the case of the coalescence of two saddle points with two branch points. The main example
is the Gauss hypergeometric function ${}_2F_1\left(a+\lambda,b-\lambda;c;(1-z)/2\right)$ in which again $\lambda\to\infty$.
For the case $z$ near $1$, we derive a uniform asymptotic expansion in terms of Kummer $M$-functions. For the coefficients we obtain
integral representations, but the results differ slightly from the ones in section \ref{Sect1}. Hence, we give details on how these integral
representations can be derived. Taking special values of the parameters $a$, $b$ and $c$ we obtain a new uniform asymptotic
expansion for Jacobi polynomials $P_n^{(\alpha,\beta)}(z)$ in terms of Laguerre polynomials $L_n^{(\alpha)}(x)$ as
$n\to\infty$ that holds uniformly for $z$ near $1$.

The second case in section \ref{Sect3} is for $z$ near $-1$. Since the details are very similar to the previous case, we
only give a few details of the uniform asymptotic expansion of the Gauss hypergeometric function in terms of Kummer $U$-functions.
Numerical illustrations are provided for both cases. 

The coalescence of a saddle point with the end point of the contour of integration is also important in many applications.
We discuss this case in the final section of the paper, but we give only the main details.


\section{The Bleistein method}\label{Sect1}

Before we can use the Bleistein method we have to bring integral (\ref{MainInt}) in canonical form via a transformation
\begin{equation}\label{Transf}
f(t,z)=p(\tau,\zeta)+p_{0}.
\end{equation}
The new integrand should have a similar critical point structure, and $\zeta$ and $p_0$ are determined by the condition
that the relevant critical points in the $t$-plane are mapped to the ones in the $\tau$-plane.
Often the function $p(\tau,\zeta)$ is a polynomial in $\tau$, but a non-polynomial example will be included in this publication.

The new integral representation is then of the form
\begin{equation}\label{Canon}
 F(\lambda,z)=\frac{\ee^{\lambda p_0}}{2\pi\im}\int_{\widetilde{\mathcal{C}}}\ee^{\lambda p(\tau,\zeta)}q(\tau,\zeta)G_{0}(\tau)\id\tau,
\end{equation}
in which
\begin{equation}\label{G0}
G_0(\tau)=2\pi\im\frac{g(t,z)}{q(\tau,\zeta)}\frac{\id t}{\id\tau},
\end{equation} 
such that
\begin{equation}\label{Approximant}
{\cal A}_n(\lambda,\zeta)=\frac{1}{2\pi\im}\int_{\widetilde{\mathcal{C}}}\ee^{\lambda p(\tau,\zeta)}\tau^n q(\tau,\zeta)\id\tau,\qquad n=0,\dots,N-1,
\end{equation}
are the main approximants. In the case of two coalescing saddle points ${\cal A}_0(\lambda,\zeta)$ is an Airy function, $p(\tau,\zeta)$ is a cubic polynomial
and $q(\tau,\zeta)=1$. See section \ref{Sect2}. In other cases the function $q(\tau,\zeta)$ contains the branch-point behaviour of the non-exponential part of the
integrand. See section \ref{Sect3}, especially (\ref{qG3}).

We note that the following details are not correct in all cases in which the Bleistein method can be used, but they cover many cases. We define for $s=0,1,2,\dots$,
\begin{eqnarray}
G_{s}(\tau)&=&\sum_{n=0}^{N-1}a_{sn}\tau^{n}+p'(\tau,\zeta)H_{s}(\tau),\nonumber \\
G_{s+1}(\tau)&=&\frac{-1}{q(\tau,\zeta)}\frac{\id}{\id\tau}\left(q(\tau)H_{s}(\tau)\right),\label{Gs}
\end{eqnarray}
where $'$ indicates differentiation with respect to $\tau$. Then via integration by parts we have
\begin{equation}\label{UniExp}
 F(\lambda,z)=\ee^{\lambda p_0}\sum_{n=0}^{N-1}{\cal A}_n(\lambda,\zeta)\sum_{s=0}^{S-1}\frac{a_{s,n}}{\lambda^s}+R_S(\lambda,z),
\end{equation}
where
\begin{equation}\label{Remainder}
 R_S(\lambda,z)=\lambda^{-S}\frac{\ee^{\lambda p_0}}{2\pi\im}\int_{\widetilde{\mathcal{C}}}\ee^{\lambda p(\tau,\zeta)}q(\tau,\zeta)G_{s}(\tau)\id\tau.
\end{equation}
Hence, the expansion in (\ref{UniExp}) seems to have an asymptotic property.

The coefficients $a_{s,n}$ can be expressed as,
\begin{equation}\label{coeff.asn}
a_{s,n}=\frac{1}{2\pi\im }\oint_{|\tau|=r} A_{s,n}(\tau)G_{0}(\tau)\id \tau,
\end{equation}
where $A_{s,n}(\tau)$ are simple rational functions that satisfy the following equation
\begin{equation}\label{S1An}
A_{s+1,n}(\tau)=\frac{q(\tau)}{p'(\tau,\zeta)}\frac{\id}{\id\tau}\left(\frac{A_{s,n}(\tau)}{q(\tau)}\right).
\end{equation}
The initial rational functions $A_{0,n}(\tau)$ will be determined on a case by case basis.
The radius $r$ in (\ref{coeff.asn}) has to be chosen in such a way that
all the relevant critical points are encircled by the contour once in the positive sense.

 According to \cite{TW14} the right-hand side of
\begin{equation}\label{treff_eqn}
\frac{1}{2\pi\im }\oint_{|\tau|=r} F(\tau)\id \tau\approx\frac{1}{2M}\sum_{m=0}^{2M-1}w_{m}F(w_{m}),~~~{\rm where}~~ w_{m}=r\ee^{\pi\im  m/M},
\end{equation}
converges exponentially fast to the left-hand side as $M\to\infty$, as long as $F(\tau)$ is analytic in a disc $|\tau|\leq \widetilde{r}$, with $r<\widetilde{r}$.

Applying this approximation to integral representation (\ref{coeff.asn}), we obtain the approximation
\begin{equation}\label{a_s_n}
a_{s,n}\approx \frac{1}{2M}\sum_{m=0}^{2M-1}w_{m}A_{s,n}(w_{m})G_{0}(w_{m}).
\end{equation}
Since the $A_{s,n}(\tau)$ are simple rational functions, the approximation of, say, $a_{10,3}$ is not much harder than that of the first coefficient $a_{0,0}$.
The main data that we need is $G_{0}(w_{m})$, $m=0,\dots,2M-1$. The function $G_0(\tau)$ is defined in (\ref{G0}) and it involves the mapping (\ref{Transf}).
This nonlinear mapping usually causes multivaluedness issues in the complex plane
when we have to determine the $t=t_m$ that corresponds to $\tau=w_m$.
However, starting at $\tau=w_0=r$ it is relatively easy to control the multivaluedness
when we move from point $\tau=w_{m}$ to $\tau=w_{m+1}$. One could use $t_m$ as an initial guess when one tries to determine $t_{m+1}$.

In implementations of these results to approximate the integral via uniform asymptotic expansion (\ref{UniExp}), it also makes sense
to interchange the order of summation via
\begin{equation}\label{fulsum}
\sum_{s=0}^{S-1}\frac{a_{s,n}}{\lambda^s}\approx \frac{1}{2M}\sum_{m=0}^{2M-1}w_{m}\widetilde{A}(w_{m})G_{0}(w_{m}),\quad {\rm where}\quad
\widetilde{A}(\tau)=\sum_{s=0}^{S-1} \lambda^{-s}A_{s,n}(\tau).
\end{equation}


\section{The case of two coalescing saddle points}\label{Sect2}
Now we suppose that the integral (\ref{MainInt}) has two saddle points  located at $t=t_{\pm}$ which depend on a parameter $z$. 
In order to express this integral in its canonical form, we consider the following cubic transformation
\begin{equation}\label{cubic}
f(t,z)=p(\tau,\zeta)+p_0=\ifrac{1}{3}\tau^{3}- \zeta \tau+ p_0,
\end{equation}
suggested by Chester, Friedman and Ursell in 1957 in \cite{CFU53}.
The saddle points $t=t_{\pm}$ should correspond to the saddle points of the cubic polynomial at $\tau=\pm \sqrt{ \zeta}$ in the complex $\tau$-plane.  Thus we have
\begin{equation}\label{parameters}
\ifrac{4}{3} \zeta^{3/2}=f(t_{-},z)-f(t_{+},z),\qquad 2p_0=f(t_{-},z)+f(t_{+},z).
\end{equation}
Substituting the cubic transformation (\ref{cubic}) in the integral (\ref{MainInt}), we obtain
\begin{equation}\label{integral5}
F(\lambda,\zeta)=\frac{\ee^{\lambda p_0}}{2\pi\im}\int_{\mathcal{\widetilde{C}}}\ee^{\lambda \left(\frac{1}{3}\tau^{3}- \zeta \tau\right)}G_{0}(\tau)\id \tau,
\end{equation}
where $\mathcal{\widetilde{C}}$ is the image of the contour $\mathcal{C}$ and
\begin{equation}\label{S2G0}
G_{0}(\tau)=2\pi\im g(t)\frac{\id t}{\id\tau}.
\end{equation}
In this case with $q(\tau)=1$, equations (\ref{Gs}) take the following form
\begin{equation}\label{bm}
 G_{s}(\tau)=a_{s,0}+a_{s,1}\tau+\left(\tau^2- \zeta\right)H_{s}(\tau),\qquad
 G_{s+1}(\tau)=-H'_{s}(\tau).
 \end{equation}
It follows that
\begin{equation}\label{S2coeff}
a_{s,0}=\frac{G_{s}(\sqrt{ \zeta})+G_{s}(-\sqrt{ \zeta})}{2},\qquad a_{s,1}=\frac{G_{s}(\sqrt{ \zeta})-G_{s}(-\sqrt{ \zeta})}{2\sqrt{ \zeta}}.
\end{equation}
However, these representations are not very useful.

For simplicity let us assume that we can deform  the contour $\mathcal{\widetilde{C}}$ to a steepest descent path from $\infty e^{-\pi\im /3}$ to $\infty e^{\pi\im /3}$. If that is the case then the 
we obtain
\begin{eqnarray}\label{approxAii}
F(\lambda, \zeta)&=&\ee^{p_0\lambda}\left(\Ai\left(\lambda^{2/3} \zeta\right)\sum_{s=0}^{S-1}\frac{a_{s,0}}{\lambda^{s+1/3}}
-\Ai'\left(\lambda^{2/3} \zeta\right)\sum_{s=0}^{S-1}\frac{a_{s,1}}{\lambda^{s+2/3}}\right)\nonumber \\
&&+R_S(\lambda , \zeta),
\end{eqnarray}
where $\Ai(x)$ is the {Airy function} and $\Ai'(x)$ is its derivate (see \cite[\href{http://dlmf.nist.gov/9.5.ii}{\S 9.5(ii)}]{NIST:DLMF}), and where
\begin{equation}\label{UAI5}
R_S(\lambda , \zeta)=\lambda^{-S}\frac{\ee^{p_0\lambda}}{2\pi\im}\int_{\mathcal{\widetilde{C}}}\ee^{\lambda \left(\frac{1}{3}\tau^{3}- \zeta \tau\right)}G_{S}(\tau)\id \tau.
\end{equation}

To obtain Cauchy-type integral representations for the coefficients $a_{s,n}$ we use \cite{OT94} and define
\begin{equation}\label{Asn}
A_{0,0}(\tau)=\frac{\tau}{\tau^{2}-\zeta},\quad A_{0,1}(\tau)=\frac{1}{\tau^{2}-\zeta},\quad
A_{s+1,n}(\tau)=\frac{1}{\tau^{2}-\zeta}\frac{\id}{\id\tau}A_{s,n}(\tau)
\end{equation}
for $s=0,1,2,\dots$, and $n=0,1$. We note that using these rational functions that we have for the coefficients integral representation(\ref{coeff.asn}), the contour of integration does not have to be a circle.
It can be any contour that encircles the two saddle points $\tau=\pm\sqrt\zeta$ once in the positive sense, and
such that all singularities of $G_0(\tau)$ are in the exterior of the contour.

Integral representation (\ref{coeff.asn}) can be used to compute the higher  coefficients in the uniform asymptotic expansion. For example, for this case, we have
\begin{equation}
A_{1,0}(\tau)=-\frac{\tau^2+\zeta}{\left(\tau^{2}-\zeta\right)^3},\qquad A_{1,1}(\tau)=\frac{-2\tau}{\left(\tau^{2}-\zeta\right)^3},
\end{equation}
and thus, using (\ref{coeff.asn}), we obtain
\begin{eqnarray}\label{secondcoeff}
a_{1,0}&=&\frac{G_{0}''(-\sqrt{\zeta})-G_{0}''(\sqrt{\zeta})}{8\zeta^{1/2}}+\frac{G_{0}'(\sqrt{\zeta})+G_{0}'(-\sqrt{\zeta})}{8\zeta}-\frac{a_{0,1}}{4\zeta},\nonumber\\
a_{1,1}&=&-\frac{G_{0}''(\sqrt{\zeta})+G_{0}''(-\sqrt{\zeta})}{8\zeta}+\frac{G_{0}'(\sqrt{\zeta})-G_{0}'(-\sqrt{\zeta})}{8\zeta^{3/2}}.
\end{eqnarray}
These representations are numerically unstable when we try to compute these coefficients near $\zeta=0$. Note that when we compute these coefficients via the trapezoidal
rule (\ref{a_s_n}) we encounter no special problems when $\zeta$ is small, since the integration variable is bounded away from the origin.

In \cite{TemmeV02} more of the rational functions $A_{s,n}(\tau)$ are computed and the results are combined with computer algebra and two point Taylor series expansions
to evaluate the coefficients $a_{s,n}$. Two point Taylor series expansions of $G_0(\tau)$ are also discussed in \cite{TemmeL02}. The Cauchy integral representations for the coefficients in these expansions are slightly simpler than (\ref{coeff.asn}), but substituting these expansions in (\ref{integral5})
results in expansions that are more complicated than (\ref{approxAii}). However, also these Cauchy type integral representations can be combined with (\ref{treff_eqn})
to numerically compute the coefficients in a stable manner.

The recent paper \cite{Dunster2017} deals with uniform asymptotic approximations in turning point problems, and they also consider the trapezoidal rule for Cauchy integrals.
If we would write (\ref{approxAii}) as 
\begin{equation}\label{Dunster}
F(\lambda, \zeta)=\ee^{p_0\lambda}\left(\Ai\left(\lambda^{2/3} \zeta\right)A(\lambda, \zeta)
-\Ai'\left(\lambda^{2/3} \zeta\right)B(\lambda, \zeta)\right),
\end{equation}
then the $A(\lambda, \zeta)$ and $B(\lambda, \zeta)$ are so-called coefficient functions.
It is these coefficient functions that are numerical computed in \cite{Dunster2017} via the trapezoidal rule.

\subsection{Numerical illustration}
As a more concrete example we use for the Bessel function the well-known integral representation
(see \cite[\href{http://dlmf.nist.gov/10.9.E17}{10.9.17}]{NIST:DLMF})
\begin{equation}
J_{\lambda}(\lambda z)=\frac{1}{2\pi\im }\int_{\infty-\pi\im }^{\infty+\pi\im }\ee^{\lambda(z\sinh t-t)}\id t.
\end{equation}
Using the notation in (\ref{MainInt}), we have $g(t,z)=1/(2\pi\im)$ and the function $f(t,z)=z\sinh t-t$ has
saddle points at $t=\pm \arccosh(z^{-1})$ which coalesce when $z=1$. Since $f(t,z)$ is odd in $t$ it follows from (\ref{parameters}) that
$p_0=0$ and we have
\begin{eqnarray}\label{Bparameters}
\ifrac{2}{3}\zeta^{2/3}=&\arccosh(1/z)-\sqrt{1-z^{2}},\qquad &0<z\leq1,\nonumber\\
\ifrac{2}{3}\left(-\zeta\right)^{2/3}=&\sqrt{z^{2}-1}-\arccos(1/z),\qquad &z\geq 1.
\end{eqnarray}
Furthermore, it also follows from (\ref{coeff.asn}) that $a_{2s+1,0}=a_{2s,1}=0$.

Uniform asymptotic expansion (\ref{approxAii}) is well-known, see for example
\cite[\href{http://dlmf.nist.gov/10.20.E4}{10.20.4}]{NIST:DLMF}. In this special case, the coefficients are easy to compute
via the methods explained in \cite[\href{http://dlmf.nist.gov/10.20}{\S10.20}]{NIST:DLMF}, and we can compare our numerical
results with the exact results. In the numerical illustration below, we take $z$ close to the  coalescing point, and we did observe
that the `exact results' are highly numerically unstable. 
This numerical instability of the exact coefficients was also observed in \cite{Temme97},
and two methods were introduced to compute the asymptotic expansion (\ref{approxAii}).
Both of these methods involve expansions in powers of $\zeta$, and are useful when $\zeta$ is small.

We take $z=0.995$, close to the coalescing point at $1$, then $\zeta=0.00630908356$. In our approximation (\ref{a_s_n}) we take
$r=1$ and $M=30$. The results are displayed in Table \ref{table:table1}. 
Note that even with such a relatively small $M$ we already obtain
$26$ digits precision in the first two coefficients. Even for $a_{10,0}$ we still have $10$ digits precision, 
and this reduces to $4$ digits for $a_{11,1}$.
However, increasing $M$ from $30$ to $40$ we obtain $23$ digits precision for $a_{11,1}$. 
This illustrates the observation in \cite{TW14}  that the trapezoidal rule for integrals converges exponentially fast.

\begin{table}[ht]
\caption{Numerical results} 
\centering 
\scalebox{1}{
\begin{tabular}{c c c c c} 
\hline\hline 
coefficient& approximation & relative error & $M$ \\ [0.3ex] 
\hline 
$a_{0,0}$ &$\phantom{-}1.2611836781$ &  $6.04\times10^{-27}$ & $30$\\ 
$a_{1,1}$ &$-0.0227706934$&  $1.31\times10^{-26}$ & $30$\\
$a_{8,0}$ & $\phantom{-}0.0004795112$ & $2.90\times10^{-19}$ & $30$\\
$a_{9,1}$ & $-0.0009398771$ & $1.43\times10^{-13}$ & $30$\\
$a_{10,0}$ & $-0.0008770417$ & $7.23\times10^{-10}$ & $30$\\
$a_{11,1}$ & $\phantom{-}0.0023029221$ & $1.56\times10^{-4}$ & $30$\\ 
$a_{11,1}$ & $\phantom{-}0.0023032804$ & $6.11\times10^{-23}$ & $40$\\[1ex] 
\hline 
\end{tabular}
}
\label{table:table1} 
\end{table}


\section{The case of coalescence of two saddle points with two branch points}\label{Sect3}
In this section, we consider the Gauss hypergeometric function
\begin{equation}\label{hyp}
\hyp{a+\lambda}{b-\lambda}{c}{\frac{1-z}{2}},
\end{equation}
where $\lambda \to \infty$. The results are related to the limits
\begin{equation}\label{S3limits}
\lim_{\lambda,\mu\to\infty}\hyp{\lambda}{\mu}{\nu+1}{\frac{x}{\lambda\mu}}=\lim_{\mu\to\infty}M\left(\mu,\nu+1,\frac{x}{\mu}\right)
=\frac{\Gamma(\nu+1)}{x^{\nu/2}} I_\nu\left(2\sqrt{x}\right).
\end{equation}
From these limits it follows that we expect interesting behaviour when the variable is small, that is in the case of (\ref{hyp}), 
when $z$ is close to $1$.
In \cite{KwOD14} the authors derive a large $\lambda$ asymptotic expansion in terms of modified Bessel functions. 
In that paper the derivation was based on
integral representation (\ref{S3Int1}) in which two saddle points coalesce with two branch points as $z\to 1$. 
One of the branch points was ignored since the integrand
was exponentially small near that point, and by considering just the coalescence  of two saddle points 
with one branch point, a uniform asymptotic expansion in terms
of modified Bessel functions could be obtained. That expansion was already known from the theory of differential equations, 
see \cite{Jones01}.

Here we deal with the same integral representation, but now we really consider the coalescence of two saddle points 
with two branch points. The uniform
asymptotic expansion that we derive in subsection \ref{S3a}  is in terms of the Kummer $M$-function, 
and holds uniformly for $z$ close to 1. 
Hence, the result is connected to the first equality sign in (\ref{S3limits}). 
The asymptotic expansion will break down near the singularity of (\ref{hyp}),
that is, at $z=-1$. For $z$ close to $-1$, we give in subsection \ref{S3d} a uniform asymptotic expansion 
in terms of the Kummer $U$-function.
Since the derivation is very similar to the previous subsections we only give the main details.

\subsection{$z$ close to $1$}\label{S3a}
For $0<z<1$, we start with the following integral representation (combine
\href{http://dlmf.nist.gov/15.8.E1}{15.8.1} with \href{http://dlmf.nist.gov/15.6.E3}{15.6.3} in \cite{NIST:DLMF})
\begin{equation}\label{S3Int1}
\hyp{a+\lambda}{b-\lambda}{c}{\frac{1-z}{2}}=\frac{L}{2 \pi\im}\int_{\infty}^{(0+)}\frac{\ee^{(\lambda+a-c) \pi\im }\left(\tau+1\right)^{b-c-\lambda}}{\tau^{\lambda+a-c+1}\left(\tau+\frac{1+z}{2}\right)^{b-\lambda} }\id \tau,
\end{equation}
where
 \begin{equation}\label{S3L}
 L=\frac{\Gamma(c)\Gamma(\lambda+1+a-c)}{ \Gamma(\lambda +a)}.
 \end{equation}
Using $\tau=e^{\pi\im  }t$ in (\ref{S3Int1}), we obtain
\begin{equation}\label{S3Int2}
\hyp{a+\lambda}{b-\lambda}{c}{\frac{1-z}{2}}=\frac{L}{2 \pi\im }\int_{-\infty}^{(0+)}\ee^{\lambda f(t)} g(t) \id t,
\end{equation}
where
\begin{equation}\label{S3IntDef}
f(t)=\ln\left(\frac{\frac{1+z}{2}-t}{1-t}\right)-\ln t,\qquad g(t)=\frac{t^{c-a-1}\left(1-t\right)^{b-c}}{\left(\frac{1+z}{2}-t\right)^{b}}.
 \end{equation}
Here the path of integration starts at $\ee^{-\pi\im }\infty$ encircles $0$ once in the positive direction 
and returns to $\ee^{\pi\im }\infty$. 
The points $1$ and $\frac{z+1}{2}$ lie outside the contour of integration. 
For $f(t)$ we choose branch cuts between $t=\frac{1+z}{2}$ and $t=1$ and the negative real axis.
 Using $z=\cos \theta$, the saddle points are located at
 \begin{equation}\label{C2Definespa}
t_{\pm}=\frac{1+\ee^{\pm \im \theta}}{2}.
\end{equation}
The branch points of the phase function are $t=0$, $t=1$ and $t=\frac{z+1}{2}$. Note that these saddle points coalesce when $\theta=0$ with two branch points at $t=1$.

To obtain a uniform asymptotic expansion, we use the transformation,
\begin{equation}\label{S3p}
f(t)=p(\tau,\zeta)+p_0=\ln\left(\frac{\tau-2\zeta}{\tau}\right)+\tau+p_0.
\end{equation}
We take $\zeta=1-\cos\sigma$. For the function $p(\tau,\zeta)$ the saddle points are located at
\begin{equation}
\tau_{\pm}=1-\ee^{\pm \im\sigma},
\end{equation}
and we will insist that these correspond to $t=t_{\pm}$. This gives us
\begin{equation}\label{S3fttau}
f(t_{\pm})=\mp\im\theta=1\mp\im\sigma-\ee^{\pm  \im\sigma}+p_0,
\end{equation}
where we obtain  $p_0=-\zeta$ and $\theta=\sigma+\sin \sigma$.

With the transformation (\ref{S3p}), we obtain integral representation
\begin{equation}\label{S3Int3}
\hyp{a+\lambda}{b-\lambda}{c}{\frac{1-z}{2}}=\frac{L\ee^{-\lambda\zeta}}{2 \pi\im }\int_{-\infty}^{(0+,2\zeta+)}\frac{\ee^{\lambda\tau}\left(\tau-2\zeta\right)^{\lambda-b}}{\tau^{\lambda-b+c}}G_{0}(\tau)\id \tau,
\end{equation}
with
\begin{equation}\label{qG3}
q(\tau)=\frac{\tau^{b-c}}{\left(\tau-2\zeta\right)^b},\qquad
G_{0}(\tau)=\left(\frac{\tau-2\zeta}{\frac{1+z}{2}-t}\right)^{b}\left(\frac{1-t}{\tau}\right)^{b-c}t^{c-a-1}\frac{\id t}{\id\tau},
\end{equation}
where we will need
\begin{equation}\label{dttau}
\frac{\id t}{\id\tau}(\tau_\pm)=\sqrt{\frac{p''(\tau_{\pm},\zeta)}{f''(t_{\pm})}},
\end{equation}
which can be obtained via l'H\^{o}pital's method. Hence, we need
\begin{equation}\label{fpdd}
f''(t_{\pm})=\pm \im\frac{4\ee^{\mp \im\theta}}{ \sin \theta},\qquad p''(\tau_{\pm},\zeta)=\pm \im\frac{\sin \sigma}{1-\cos \sigma}.
\end{equation}

Now (\ref{Gs}) reads
\begin{eqnarray}
G_{s}(\tau)&=&a_{s,0}+a_{s,1}\tau+\frac{\tau^2-2\zeta\tau+2\zeta}{\tau\left(\tau-2\zeta\right)}H_{s}(\tau)\label{S3Gs1}\\
G_{s+1}(\tau)&=&-\frac{\left(\tau-2\zeta\right)^b}{\tau^{b-c}}\frac{\id}{\id\tau}\left(\frac{\tau^{b-c}}{\left(\tau-2\zeta\right)^b}H_{s}(\tau)\right),\label{S3Gs2}
\end{eqnarray}
where
\begin{equation}\label{anbn}
a_{s,1}=\frac{G_{s}(\tau_{+})-G_{s}(\tau_{-})}{\tau_+-\tau_-},\qquad a_{s,0}=G_{s}(\tau_{+})-\tau_{+} a_{s,1}.
\end{equation}
For $s=0$, we have
\begin{equation}
G_{0}(\tau_{\pm})=\ee^{\pm \im\left(\left(\frac{c}{2}-b\right)\sigma+(b-a)\frac{\theta}{2}\right)}R,
\end{equation}
and
\begin{equation}
R=\left(\frac{2\sin(\sigma/2)}{\sin(\theta/2)}\right)^{c-\frac{1}{2}}\sqrt{\cos\left(\frac{\sigma}{2}\right)}\left(\cos\left(\frac{\theta}{2}\right)\right)^{c-a-b-\frac{1}{2}}.
\end{equation}
Thus the first two coefficients are
\begin{eqnarray}\label{First2}
a_{0,0}&=&\frac{\cos\left(\left(b-\frac{c-1}{2}\right)\sigma-(b-a)\frac{\theta}{2}\right)}{\cos \left(\sigma/2\right)} R,\nonumber\\
a_{0,1}&=&\frac{\sin\left(\left(b-\frac{c}{2}\right)\sigma-(b-a)\frac{\theta}{2}\right)}{\sin \sigma} R.
\end{eqnarray}
Using the  integral representation \cite[\href{http://dlmf.nist.gov/13.4.E13}{13.4.13}]{NIST:DLMF} for the Kummer $M$-function we obtain
\begin{eqnarray}\label{C2int2aa}
\hyp{a+\lambda}{b-\lambda}{c}{\frac{1-z}{2}}&\sim& L\ee^{-\lambda\zeta}
\left(\frac{\lambda^{c-1}}{\Gamma(c)}{ M}(b-\lambda,c,2\zeta\lambda)\sum_{s=0}^\infty \frac{a_{s,0}}{\lambda^s}\right.\nonumber\\
&&\left.+\frac{\lambda^{c-2}}{\Gamma(c-1)}{ M}(b-\lambda,c-1,2\zeta\lambda)
\sum_{s=0}^\infty \frac{a_{s,1}}{\lambda^s}\right),\nonumber\\
&&
\end{eqnarray}
as $|\lambda|\to\infty$. This expansion is created such that it holds uniformly for $z$ near $1$. 
It will hold for bounded $z$, where we also have to bound $z$ away from $-1$, 
but it is probably not very practical when $z$ moves too far from $1$ since
the tracking of the multivaluedness is not straightforward.

In terms of the `Olver' functions (see \cite[\href{http://dlmf.nist.gov/13.2.E3}{13.2.3} and \href{http://dlmf.nist.gov/15.2.E2}{15.2.2}]{NIST:DLMF}) this result reads
\begin{eqnarray}\label{C3int3}
&&\Ohyp{a+\lambda}{b-\lambda}{c}{\frac{1-z}{2}}\nonumber\\
&&\qquad\quad\sim \frac{\Gamma(\lambda+1+a-c)}{ \Gamma(\lambda +a)\ee^{\lambda\zeta}}
\left({\bf M}(b-\lambda,c,2\zeta\lambda)\sum_{s=0}^\infty \frac{a_{s,0}}{\lambda^{s+1-c}}\right.\nonumber\\
&&\qquad\qquad\qquad\qquad\qquad\quad\left.+{\bf M}(b-\lambda,c-1,2\zeta\lambda)
\sum_{s=0}^\infty \frac{a_{s,1}}{\lambda^{s+2-c}}\right).
\end{eqnarray}
Taking in (\ref{C2int2aa}) $a={\alpha+\beta+1}$, $b=0$, $c=\alpha+1$ and $\lambda=n$ will give us
a uniform asymptotic approximation for the Jacobi polynomials
\begin{equation}\label{Jacobi}
P_n^{(\alpha,\beta)}(z)=\frac{(\alpha+1)_n}{n!}\hyp{-n}{n+\alpha+\beta+1}{\alpha+1}{{\frac{1-z}2}},
\end{equation}
in terms of Laguerre polynomials
\begin{equation}\label{Laguerre}
L_n^{(\alpha)}(x)=\frac{(\alpha+1)_n}{n!}M\left(-n,\alpha+1,x\right).
\end{equation}
We can present the result as
\begin{eqnarray}\label{JacobiLaguerre}
P_n^{(\alpha,\beta)}(z)&\sim&\frac{\Gamma(n+\beta+1)\ee^{-n\zeta}}{\Gamma(n+\alpha+\beta+1)}
\left(L_n^{(\alpha)}(2\zeta n)\sum_{s=0}^\infty \frac{a_{s,0}}{n^{s-\alpha}}\right.\nonumber\\
&&\qquad\qquad\quad\left.+(n+\alpha)L_n^{(\alpha-1)}(2\zeta n)\sum_{s=0}^\infty \frac{a_{s,1}}{n^{s-\alpha+1}}\right),
\end{eqnarray}
as $n\to\infty$ uniformly for $z$ near $1$.

\subsection{Coefficients}\label{S3b}
In order to find the coefficients $a_{s,n}$  in (\ref{treff_eqn}), we make use of the method introduced in \cite{OT94} and propose the following result.
 
\begin{corollary}

Let
\begin{equation}\label{S3A0B0}
A_{0,0}(\tau,\zeta)=\frac{\tau-2\zeta}{\tau^{2}-2\zeta \tau+2\zeta},\qquad A_{0,1}(\tau,\zeta)=\frac{1}{\tau^{2}-2\zeta \tau+2\zeta},
\end{equation}
and for $s=0,1,2,\dots$, $n=0,1$, let
\begin{equation}\label{S3An}
A_{s+1,n}(\tau,\zeta)=\frac{\left(\tau-2\zeta\right)^{1-b}\tau^{b-c+1}}{\tau^{2}-2\zeta \tau+2\zeta}\frac{\id}{\id\tau}\left(\frac{\left(\tau-2\zeta\right)^{b}}{\tau^{b-c}}A_{s,n}(\tau,\zeta)\right).
\end{equation}
Then the coefficients $a_{s,1}$ have integral representation (\ref{coeff.asn}) with $n=1$, but for $a_{s,0}$ we have
\begin{equation}\label{AB0s1}
a_{s,0}=\frac{1}{2\pi\im }\int_{\mathcal{C}}G_{0}(\tau)\left(A_{s,0}(\tau,\zeta)+(c-1)A_{s-1,1}(\tau,\zeta)\right)\id \tau.
\end{equation}
\end{corollary}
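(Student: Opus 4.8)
The plan is to mimic the derivation of the basic representation \eqref{coeff.asn}--\eqref{S1An} from \cite{OT94}, but to track the extra factor $\tau^{-(\lambda-b+c)}(\tau-2\zeta)^{\lambda-b}$ that sits in \eqref{S3Int3} in front of $G_0(\tau)$ and, crucially, the factor $q(\tau)=\tau^{b-c}(\tau-2\zeta)^{-b}$ from \eqref{qG3}, which is what makes this case differ from section \ref{Sect1}. First I would set up the integration-by-parts step directly on \eqref{S3Int3}: writing the integrand as $\ee^{\lambda p(\tau,\zeta)}q(\tau,\zeta)G_s(\tau)$ and using the splitting \eqref{S3Gs1}, the polynomial part $a_{s,0}+a_{s,1}\tau$ produces the two approximants (the Kummer $M$-functions), while the term $p'(\tau,\zeta)H_s(\tau)q(\tau)\ee^{\lambda p}$ integrates by parts to give $\lambda^{-1}$ times a new integral with $G_{s+1}$ as in \eqref{S3Gs2}. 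The point is that $p'(\tau,\zeta)=(\tau^2-2\zeta\tau+2\zeta)/(\tau(\tau-2\zeta))$ here, so the denominator $\tau^2-2\zeta\tau+2\zeta$ is exactly the quadratic appearing in \eqref{S3A0B0}, and $\tau(\tau-2\zeta)$ gets absorbed when we compensate with the $\tau^{b-c+1}(\tau-2\zeta)^{1-b}$ prefactor in \eqref{S3An}. So the recursion \eqref{S3An} is the natural transpose (adjoint) of the $G_s\mapsto G_{s+1}$ recursion.

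Next I would make the adjoint relation precise. The $a_{s,n}$ are the expansion coefficients of $G_s(\tau)$ at the saddle points $\tau_\pm$ — concretely \eqref{anbn} — and the claim is that these Lagrange-type evaluations can be rewritten as the contour integrals \eqref{coeff.asn}. The mechanism from \cite{OT94} is that $A_{s,n}(\tau,\zeta)$ is constructed so that $\oint \ee^{\lambda p}q\,(a_{s,0}+a_{s,1}\tau)\,\tau^{-(\lambda-b+c)}(\tau-2\zeta)^{\lambda-b}\id\tau$ — i.e. the "polynomial part" of $G_s$ — is reproduced by $\oint A_{s,n}G_0\id\tau$ after repeatedly undoing the integration by parts that defines $G_s$ from $G_0$. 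Iterating the adjoint recursion \eqref{S3An} $s$ times starting from \eqref{S3A0B0} matches iterating \eqref{S3Gs2} $s$ times starting from $G_0$; at each step the factor $q(\tau)$ and its reciprocal cancel in a telescoping way, and the Leibniz-rule boundary terms vanish because the contour $\mathcal C$ is closed and the singularities of $G_0$ lie outside it. One checks the base case $s=0$ by hand: $A_{0,1}(\tau,\zeta)$ has simple poles at $\tau_\pm$ with residues $1/(\tau_+-\tau_-)$ and $-1/(\tau_+-\tau_-)$, so the residue theorem applied to $\oint A_{0,1}G_0$ reproduces $a_{0,1}$ in \eqref{anbn}; similarly for $A_{0,0}$ one gets $G_0(\tau_+)-\tau_+a_{0,1}$ because the residue of $(\tau-2\zeta)/(\tau^2-2\zeta\tau+2\zeta)$ at $\tau_\pm$ is $(\tau_\pm-2\zeta)/(\tau_+-\tau_-)=\mp(\tau_\mp)/(\tau_+-\tau_-)$ up to the right signs, using $\tau_+\tau_-=2\zeta$ and $\tau_++\tau_-=2\zeta$.

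The one genuine subtlety — and this is why the statement for $a_{s,0}$ reads \eqref{AB0s1} rather than the naive \eqref{coeff.asn} — is that $q(\tau)=\tau^{b-c}(\tau-2\zeta)^{-b}$ is not analytic at $\tau=0$ (it has the factor $\tau^{b-c}$), whereas in the pure two-saddle case $q\equiv1$. When one differentiates $q(\tau)H_s(\tau)$ in \eqref{S3Gs2}, the logarithmic-derivative term $q'(\tau)/q(\tau)=(b-c)/\tau-b/(\tau-2\zeta)$ contributes a piece proportional to $(c-1)/\tau$ (after the bookkeeping with $b$) that, upon translating back through the adjoint relation, is exactly the extra integral of $(c-1)A_{s-1,1}(\tau,\zeta)$ appearing in \eqref{AB0s1}. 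So the step I expect to be the main obstacle is carefully isolating this $1/\tau$ contribution: I would track, through one application of \eqref{S3Gs2} and \eqref{S3An}, precisely which term of the Leibniz expansion of $\frac{\id}{\id\tau}\bigl((\tau-2\zeta)^b\tau^{c-b}A_{s,n}\bigr)$ fails to be absorbed by the prefactor, show it equals $(c-1)\tau^{-1}\cdot(\text{the }n=1\text{ rational function at level }s-1)$, and verify that for $n=1$ this anomalous term is absent (consistent with "the coefficients $a_{s,1}$ have integral representation \eqref{coeff.asn}"), while for $n=0$ it accumulates exactly into the stated correction. A clean way to organise this is to prove by induction on $s$ that $A_{s,0}(\tau,\zeta)+(c-1)A_{s-1,1}(\tau,\zeta)$ is the rational function whose contour integral against $G_0$ yields $a_{s,0}$, the inductive step being a direct application of the recursions \eqref{S3Gs2}, \eqref{S3An} together with the identity $p'(\tau,\zeta)q(\tau)=\frac{\id}{\id\tau}\bigl(\log(\text{something})\bigr)\cdot q$ that governs the telescoping.
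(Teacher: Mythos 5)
Your overall architecture matches the paper's: rewrite the Lagrange-type formulas (\ref{anbn}) as contour integrals via residues at $\tau_\pm$ (your base-case computation with $\tau_++\tau_-=2\zeta$ is exactly the paper's (\ref{As1})--(\ref{As0})), then repeatedly transfer the derivative in (\ref{S3Gs2}) onto the rational functions by integration by parts over the closed contour, which is precisely what the adjoint recursion (\ref{S3An}) encodes. However, you misidentify the mechanism that produces the correction term $(c-1)A_{s-1,1}$, and this is the one step that actually needs proof. It is \emph{not} a pointwise ``unabsorbed Leibniz term'' equal to $(c-1)\tau^{-1}$ times the $n=1$ rational function; no such identity between rational functions holds. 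The integration by parts itself goes through cleanly (all boundary terms vanish on the closed contour, exactly as you say). The correction arises at the \emph{next} step: after one unwinding you have $a_{s,0}=\frac{1}{2\pi\im}\int_{\mathcal C}\bigl(G_{s-1}(\tau)-a_{s-1,0}-a_{s-1,1}\tau\bigr)A_{1,0}(\tau,\zeta)\id\tau$, because only the part $p'H_{s-1}$ of the splitting (\ref{S3Gs1}) feeds into the recursion, so the linear polynomial must be subtracted off. In the two-saddle case of section \ref{Sect2} this subtraction costs nothing since the integral of a polynomial against $A_{1,0}$ vanishes; here it does not, because $A_{1,0}(\tau,\zeta)\sim(c-1)/\tau^{2}$ as $\tau\to\infty$ (this is where your observation about $q'/q$ enters: $q(\tau)A_{0,0}(\tau,\zeta)\sim\tau^{c-1}$ at infinity), so the residue at infinity gives $\frac{1}{2\pi\im}\oint(a_{s-1,0}+a_{s-1,1}\tau)A_{1,0}\id\tau=-(c-1)a_{s-1,1}$, whence $a_{s,0}=(c-1)a_{s-1,1}+\frac{1}{2\pi\im}\int_{\mathcal C}G_{s-1}A_{1,0}\id\tau$.

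What your proposal is missing, concretely, are the decay estimates that control when this happens: $A_{s,0}(\tau,\zeta)=\bO(\tau^{-3})$ for $s\ge2$ and $A_{s,1}(\tau,\zeta)=\bO(\tau^{-3})$ for $s\ge1$. These are what guarantee (i) that the correction appears exactly once (at the first unwinding step) rather than ``accumulating'' at every step as your wording suggests, so that the remaining $s-1$ steps reduce $\int G_{s-1}A_{1,0}$ all the way to $\int G_{0}A_{s,0}$ with no further anomalies, and (ii) that the $n=1$ case is anomaly-free, which is the other half of the corollary. The constant $(c-1)a_{s-1,1}$ is then converted into the integrand term $(c-1)A_{s-1,1}(\tau,\zeta)$ simply by invoking the already-established representation (\ref{coeff.asn}) for $a_{s-1,1}$ — not by any pointwise cancellation. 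Without establishing these asymptotic orders at infinity your proposed induction does not close, so as written the proof has a genuine gap at its central step.
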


\begin{proof}

Since $G_{0}(\tau)$ is analytic near the saddle points $\tau_{\pm}$, we can write the first expression in (\ref{anbn}) as
\begin{eqnarray}\label{As1}
a_{s,1}&=&\frac{1}{\tau_+ -\tau_-}\frac{1}{2\pi\im }\int_{\mathcal{C}}\left(\frac{G_{s}(\tau)}{\tau-\tau_{+}}-\frac{G_{s}(\tau)}{\tau-\tau_{-}}\right)\id \tau\nonumber \\
&=&\frac{1}{2\pi\im }\int_{\mathcal{C}}G_{s}(\tau)A_{0,1}(\tau,\zeta)\id \tau,
\end{eqnarray}
and combining this result with the second expression in (\ref{anbn}) gives us
\begin{eqnarray}\label{As0}
a_{s,0}&=&\frac{1}{1+\ee^{\im\sigma}}\frac{1}{2\pi\im }\int_{\mathcal{C}}\left(\frac{G_{s}(\tau)}{\tau-\tau_{+}}+\ee^{\im\sigma}\frac{G_{s}(\tau)}{\tau-\tau_{-}}\right)\id \tau\nonumber \\
&=&\frac{1}{2\pi\im }\int_{\mathcal{C}}G_{s}(\tau)A_{0,0}(\tau,\zeta)\id \tau,
\end{eqnarray}
where $\mathcal{C}$ is a simple closed contour which encircles the points $\tau_{\pm}$.
Using (\ref{S3Gs2}) with $s$ replaced by $s-1$ and then integration by parts one obtains
\begin{eqnarray}\label{As-10}
a_{s,0}&=&\frac{1}{2\pi\im }\int_{\mathcal{C}}\frac{\tau^{b-c}}{\left(\tau-2\zeta\right)^{b}}H_{s-1}(\tau)\frac{\id}{\id\tau}\left(\frac{\left(\tau-2\zeta\right)^{b}}{\tau^{b-c}}A_{0,0}(\tau,\zeta)\right)\id \tau\nonumber \\
&=&\frac{1}{2\pi\im }\int_{\mathcal{C}}\frac{\tau^{2}-2\zeta \tau+2\zeta}{\tau\left(\tau-2\zeta\right)}H_{s-1}(\tau)A_{1,0}(\tau,\zeta)\id \tau\nonumber \\
&=&\frac{1}{2\pi\im }\int_{\mathcal{C}}\left(G_{s-1}(\tau)-a_{s-1,0}-a_{s-1,1}\tau\right)A_{1,0}(\tau,\zeta)\id \tau,
\end{eqnarray}
where in the second line we are using (\ref{S3An}) and in the third line we use (\ref{S3Gs1}). Since $A_{1,0}(\tau,\zeta)\sim (c-1)/\tau^2$ as $\tau\to\infty$ we have
\begin{equation}\label{As-11}
a_{s,0}=(c-1)a_{s-1,1}+\frac{1}{2\pi\im }\int_{\mathcal{C}}G_{s-1}(\tau)A_{1,0}(\tau,\zeta)\id \tau.
\end{equation}
We can continue with this process and use the fact that for $s=2,3,4,\dots$, we have $A_{s,0}(\tau,\zeta)=\bO\left(\tau^{-3}\right)$ as $\tau\to\infty$. The result is
\begin{equation}\label{A0s}
a_{s,0}=(c-1)a_{s-1,1}+\frac{1}{2\pi\im }\int_{\mathcal{C}}G_{0}(\tau)A_{s,0}(\tau,\zeta)\id \tau.
\end{equation}
Hence, this result differs from (\ref{coeff.asn}) in the case $n=0$. 

For the case $n=1$ the details are very similar, but since for $s=1,2,3,\dots$, we have $A_{s,1}(\tau,\zeta)=\bO\left(\tau^{-3}\right)$, as $\tau\to\infty$,
we do obtain (\ref{coeff.asn}). Combining that result with (\ref{A0s}) will give us (\ref{AB0s1}).
\end{proof}

\subsection{Numerical illustration}\label{S3c}
We check our approximation for the coefficients by using them in uniform asymptotic approximation (\ref{C2int2aa}). We take $a=b=c=\frac12$, $z=0.9$ and $\lambda=20$.
The corresponding $\zeta=0.025536930$. In the calculation of the coefficients via (\ref{a_s_n}) we take again $M=30$ and $r=1$.
The results are displayed in Table \ref{table:table2}. As mentioned before, in most publication on uniform asymptotics for integrals, the authors only give
$a_{0,0}$ and $a_{0,1}$, like we do in (\ref{First2}). Here we illustrate that it is now possible to take many more terms and obtain much better approximations.
\begin{table}[ht]
\caption{Numerical results} 
\centering 
\scalebox{0.95}{
\begin{tabular}{c c c c c} 
\hline\hline 
$s$& $a_{s,0}$ & $a_{s,1}$ &Asymp approx (\ref{C2int2aa}) & Relative error \\ [0.3ex] 
\hline 
0 &$ \phantom{-}1.014466976$ & $\phantom{-}0.257732955$ & $-0.943360075$& $7.91\times10^{-5}$\\ 
1 &$ \phantom{-}0.000203043$ & $-0.016300238$ & $-0.943287124$& $1.80\times10^{-6}$\\
2 &$ \phantom{-}0.000402104$ & $-0.010444854$ & $-0.943285401$& $2.84\times10^{-8}$ \\
3 &$ \phantom{-}0.000068198$ & $\phantom{-}0.001403704$ & $-0.943285427$& $1.47\times10^{-9}$ \\
4 &$ -0.000133905$ & $\phantom{-}0.003465157$ & $-0.943285428$& $4.34\times10^{-11}$ \\ [1ex] 
\hline 
\end{tabular}
}
\label{table:table2} 
\end{table} 

\subsection{ $z$ close to $-1$}\label{S3d}
For $-1<z<0$, we start with integral representation (\ref{S3Int1}) which we present as
\begin{equation}\label{S4Int1}
\hyp{a+\lambda}{b-\lambda}{c}{\frac{1-z}{2}}=\frac{L}{2\pi\im }\int_{\infty}^{(0+)}\ee^{\lambda f(t)}g(t) \id t ,
\end{equation}
where $\Re(a+\lambda)>0$, $c-a-\lambda\neq 1,2,3,\cdots$, and where
\begin{equation}\label{C2bDefineff}
f(t)=\ln\left(1+\frac{1+z}{2t}\right)-\ln \left(1+t\right),\qquad g(t)=\frac{t^{c-a-1}\left(1+t\right)^{b-c}}{\left(\frac{1+z}{2}+t\right)^{b}},
\end{equation}
and
\begin{equation}\label{S4L}
L=\frac{\Gamma(c)\Gamma(\lambda+1+a-c)}{\ee^{\left(c-a-\lambda\right)\pi\im }\Gamma(\lambda+a)}.\qquad
\end{equation}
We choose  the branch cuts of the phase function $f(t)$ between the points $t=-\frac{1+z}{2}$ and $t=0$ and the half line $t<-1$. Using $z=-\cos \theta$, the saddle points are located at
 \begin{equation}\label{C2Definespb}
t_{\pm}=\frac{\ee^{\pm \im\theta}-1}{2}.
\end{equation}
when $\theta=0$ then $z=-1$ and the two saddle points and two of the branch points will coalesce at $t=0$.

To obtain a uniform asymptotic expansion, we use the transformation,
\begin{equation}\label{S4p}
f(t)=p(\tau,\zeta)+p_0=\ln\left(\frac{\tau+2\zeta}{\tau}\right)-\tau+p_0.
\end{equation}
We take $\zeta=1-\cos\sigma$. For the function $p(\tau,\zeta)$ the saddle points are located at
\begin{equation}
\tau_{\pm}=\ee^{\pm \im\sigma}-1,
\end{equation}
and we will insist that these correspond to $t=t_{\pm}$. Again this will give us (\ref{S3fttau}) and again we have $p_0=-\zeta$ and $\theta=\sigma+\sin \sigma$.

With the transformation (\ref{S4p}), we obtain integral representation
\begin{eqnarray}\label{S3Int3a}
&&\hyp{a+\lambda}{b-\lambda}{c}{\frac{1-z}{2}}=\frac{L\ee^{-\lambda\zeta}}{2\pi\im}\int_{\infty}^{(0+)}\frac{\ee^{-\lambda\tau}\left(\tau+2\zeta\right)^{\lambda-b}}{\tau^{\lambda-c+a+1}}G_{0}(\tau)\id \tau,
\end{eqnarray}
with
\begin{equation}
q(\tau)=\frac{\tau^{c-a-1}}{\left(\tau+2\zeta\right)^b},\qquad
G_{0}(\tau)=\frac{\left(t/\tau\right)^{c-a-1}}{\left({1+t}\right)^{c-b}}\left(\frac{\tau+2\zeta}{\frac{1+z}{2}+t}\right)^{b}\frac{\id t}{\id\tau}.
\end{equation}
The functions $G_s(\tau)$ are defined via (\ref{Gs}) and the coefficients $a_{s,n}$ are defined in (\ref{anbn}).
Again $\frac{\id t}{\id\tau}(\tau_\pm)$ follows from (\ref{dttau}) and (\ref{fpdd}). Thus for $s=0$, we have

\begin{equation}
G_{0}(\tau_{\pm})=\ee^{\pm \im\left((b-a)\frac{\theta}{2}-\left(b+c-a-1\right)\frac{\sigma}{2}\right)}R,
\end{equation}
where
\begin{equation}
R=\left(\frac{2\sin(\sigma/2)}{\sin(\theta/2)}\right)^{b+a-c+\frac{1}{2}}\sqrt{\cos\left(\frac{\sigma}{2}\right)}\left(\cos\left(\frac{\theta}{2}\right)\right)^{\frac{1}{2}-c}.
\end{equation}
For the first two coefficients we have again exact representations
\begin{eqnarray}\label{S4a00}
a_{0,0}&=&\frac{\cos\left(\left(b-a\right)\frac{\theta}{2}-(b-a+c)\frac{\sigma}{2}\right)}{\cos \left(\frac{\sigma}{2}\right)} R,\nonumber\\
a_{0,1}&=&\frac{\sin\left(\left(b-a\right)\frac{\theta}{2}-(b-a+c-1)\frac{\sigma}{2}\right)}{\sin \sigma} R.
\end{eqnarray}
To find the coefficients $a_{s,n}$  we define the rational functions
\begin{equation}\label{S4A0B0}
A_{0,0}(\tau,\zeta)=\frac{\tau+2\zeta}{\tau^{2}+2\zeta \tau+2\zeta},\qquad A_{0,1}(\tau,\zeta)=\frac{1}{\tau^{2}+2\zeta \tau+2\zeta},
\end{equation}
and the other rational functions follow again from (\ref{S1An}).

For the case $n=1$ integral representation (\ref{coeff.asn}) still holds, and for the case $n=0$ we have
\begin{equation}\label{S4AB0s}
a_{s,0}=\frac{1}{2\pi\im }\int_{\mathcal{C}}G_{0}(\tau)\left(A_{s,0}(\tau,\zeta)+(a+b-c)A_{s-1,1}(\tau,\zeta)\right)\id \tau.
\end{equation}
The details for the derivation are similar to the previous case.

Combining  integral representation \cite[\href{http://dlmf.nist.gov/13.4.E14}{13.4.14}]{NIST:DLMF} with tranformation \cite[\href{http://dlmf.nist.gov/13.2.E40}{13.2.40}]{NIST:DLMF} will give us
\begin{eqnarray}\label{S4C2int2a}
&&\hyp{a+\lambda}{b-\lambda}{c}{\frac{1-z}{2}}\nonumber\\
&&\quad\sim \frac{\Gamma(c)\ee^{-\lambda\zeta}}{ \Gamma(a+\lambda)}\left(
U(b-\lambda,a+b-c+1,2\zeta\lambda)\sum_{s=0}^\infty \frac{a_{s,0}}{\lambda^{s+c-a-b}}\right.\nonumber\\
&&\qquad\left.-(\lambda+a-c)U(b-\lambda,a+b-c,2\zeta\lambda)\sum_{s=0}^\infty \frac{a_{s,1}}{\lambda^{s+c-a-b+1}}\right),
\end{eqnarray}
as $|\lambda|\to\infty$.

\subsection{Numerical illustration}\label{S3e}
We check our approximation for the coefficients by using them in uniform asymptotic approximation (\ref{S4C2int2a}). We take $a=b=c=\frac12$, $z=-0.9$ and $\lambda=20$.
The corresponding $\zeta=0.025536930$. In the calculation of the coefficients via (\ref{a_s_n}) we take again $M=30$ and $r=1$.
The results are displayed in table \ref{table:table3}. 
\begin{table}[ht]
\caption{Numerical results} 
\centering 
\scalebox{0.95}{
\begin{tabular}{c c c c c} 
\hline\hline 
$s$& $a_{s,0}$ & $a_{s,1}$ &Asymp approx (\ref{S4C2int2a}) & Relative error \\ [0.3ex] 
\hline 
0 &$ \phantom{-}1.012308968$ & $\phantom{-}0.253890293$ & $-4.111109900$& $1.40\times10^{-4}$\\ 
1 &$ \phantom{-}0.001026614$ & $-0.015679076$ & $-4.111675974$& $2.40\times10^{-6}$\\
2 &$ -0.000137782$ & $-0.010312760$ & $-4.111686107$& $6.11\times10^{-8}$ \\
3 &$ -0.000330251$ & $\phantom{-}0.001268511$ & $-4.111685864$& $1.93\times10^{-9}$ \\
4 &$ \phantom{-}0.000047172$ & $\phantom{-}0.003421650$ & $-4.111685855$& $1.02\times10^{-10}$ \\ [1ex] 
\hline 
\end{tabular}
}
\label{table:table3} 
\end{table}


\section{Saddle point near the end point of the interval}\label{Sect4}
According to \cite[Chapter 22]{Temme15} the canonical form is
\begin{equation}\label{S5canon}
F_\beta(\lambda,\zeta)=\frac1{\Gamma(\beta)}\int_0^\infty \tau^{\beta-1}\ee^{\lambda\left(\zeta\tau-\frac12\tau^2\right)} G_0(\tau)\id\tau,
\end{equation}
and the integration by parts trick is
\begin{eqnarray}\label{S5Gs}
G_{s}(\tau)&=&a_{s,0}+a_{s,1}\tau+(\tau-\zeta)\tau H_{s}(\tau),\nonumber \\
G_{s+1}(\tau)&=&\tau^{1-\beta}\frac{\id}{\id\tau}\left(\tau^{\beta}H_{s}(\tau)\right).
\end{eqnarray}
Note that this differs slightly from (\ref{Gs}). Using (\ref{S5Gs}) in (\ref{S5canon}) produces the expansion
\begin{eqnarray}\label{S5Uniform}
F_\beta(\lambda,\zeta)&\sim& \ee^{\zeta^2\lambda/4}\left(U\left(\beta-\ifrac12,-\zeta\sqrt\lambda\right)\sum_{s=0}^\infty \frac{a_{s,0}}{\lambda^{(2s+\beta)/2}}\right.\nonumber\\
&&\qquad\qquad\left.+\beta U\left(\beta+\ifrac12,-\zeta\sqrt\lambda\right)\sum_{s=0}^\infty \frac{a_{s,1}}{\lambda^{(2s+\beta+1)/2}}\right),
\end{eqnarray}
where for the parabolic cylinder function $U(a,z)$ we have used integral representation \cite[\href{http://dlmf.nist.gov/12.5.E1}{12.5.1}]{NIST:DLMF}.
The coefficients are defined via,
\begin{eqnarray}\label{S5asGs}
a_{s,0}=G_s(0),\qquad a_{s,1}=\frac{G_s(\zeta)-G_s(0)}{\zeta}.
\end{eqnarray}
Again integral representations (\ref{coeff.asn}) hold where in this case we have
\begin{eqnarray}\label{S5As}
A_{0,0}(\tau,\zeta)&=&\frac1\tau,\qquad A_{0,1}(\tau,\zeta)=\frac1{\tau\left(\tau-\zeta\right)},\nonumber\\
A_{s+1,n}(\tau,\zeta)&=&\frac{\tau^{\beta-1}}{\zeta-\tau}
\frac{\id}{\id\tau}\left(\tau^{1-\beta}A_{s,n}(\tau,\zeta)\right),
\end{eqnarray}
for $s=0,1,2,\dots$, $n=0,1$. Hence, in the case that $|\zeta|<r$ the coefficients can be computed numerically via (\ref{a_s_n}).

For more details for this case see \cite[Chapter 22]{Temme15}.

\section*{Acknowledgments}
This research was supported by a research grant (GRANT11863412/70NANB15H221) from the National Institute of Standards and Technology.


\begin{thebibliography}{10}
\providecommand{\url}[1]{\texttt{#1}}
\providecommand{\urlprefix}{URL }
\expandafter\ifx\csname urlstyle\endcsname\relax
  \providecommand{\doi}[1]{doi:\discretionary{}{}{}#1}\else
  \providecommand{\doi}{doi:\discretionary{}{}{}\begingroup
  \urlstyle{rm}\Url}\fi
\providecommand{\eprint}[2][]{\url{#2}}

\bibitem{Blei66}
\textsc{N.~Bleistein}, Uniform asymptotic expansions of integrals with
  stationary point near algebraic singularity, \emph{Comm. Pure Appl. Math.}
  19:353--370 (1966). MR 0204943 (34 \#4778).

\bibitem{OT94}
\textsc{A.~B. Olde~Daalhuis} and \textsc{N.~M. Temme}, Uniform {A}iry-type
  expansions of integrals, \emph{SIAM J. Math. Anal.} 25:304--321 (1994). MR
  1266561 (95h:41056).

\bibitem{TW14}
\textsc{L.~N. Trefethen} and \textsc{J.~A.~C. Weideman}, The exponentially
  convergent trapezoidal rule, \emph{SIAM Rev.} 56:385--458 (2014). MR 3245858.

\bibitem{Dunster2017}
\textsc{T.~M. Dunster}, \textsc{A.~Gil}, and \textsc{J.~Segura}, Computation of
  asymptotic expansions of turning point problems via {C}auchy's integral
  formula: Bessel functions, \emph{Constructive Approximation} (2017), 1--31.

\bibitem{CFU53}
\textsc{C.~Chester}, \textsc{B.~Friedman}, and \textsc{F.~Ursell}, An extension
  of the method of steepest descents, \emph{Proc. Cambridge Philos. Soc.}
  53:599--611 (1957). MR 0090690 (19,853a).

\bibitem{NIST:DLMF}
{\it NIST Digital Library of Mathematical Functions} http://dlmf.nist.gov/,
  Release 1.0.12 of 2016-09-09, F.~W.~J. Olver, A.~B. {Olde Daalhuis}, D.~W.
  Lozier, B.~I. Schneider, R.~F. Boisvert, C.~W. Clark, B.~R. Miller and B.~V.
  Saunders, eds. \urlprefix\url{http://dlmf.nist.gov/}.

\bibitem{TemmeV02}
\textsc{R.~Vidunas} and \textsc{N.~M. Temme}, Symbolic evaluation of
  coefficients in {A}iry-type asymptotic expansions, \emph{J. Math. Anal.
  Appl.} 269:317--331 (2002). MR 1907888.

\bibitem{TemmeL02}
\textsc{J.~L. L{{\'o}}pez} and \textsc{N.~M. Temme}, Two-point {T}aylor
  expansions of analytic functions, \emph{Stud. Appl. Math.} 109:297--311
  (2002). MR 1934653.

\bibitem{Temme97}
\textsc{N.~M. Temme}, Numerical algorithms for uniform {A}iry-type asymptotic
  expansions, \emph{Numer. Algorithms} 15:207--225 (1997). MR 1475178.

\bibitem{KwOD14}
\textsc{S.~Farid~Khwaja} and \textsc{A.~B. Olde~Daalhuis}, Uniform asymptotic
  expansions for hypergeometric functions with large parameters {IV},
  \emph{Anal. Appl. (Singap.)} 12:667--710 (2014). MR 3277949.

\bibitem{Jones01}
\textsc{D.~S. Jones}, Asymptotics of the hypergeometric function, \emph{Math.
  Methods Appl. Sci.} 24:369--389, Applied mathematical analysis in the last
  century (2001). MR 1821932 (2002f:33006).

\bibitem{Temme15}
\textsc{N.~M. Temme}, Asymptotic methods for integrals,  \emph{Series in
  Analysis}, vol.~6, World Scientific Publishing Co. Pte. Ltd., Hackensack, NJ,
  2015. MR 3328507.

\end{thebibliography}
\end{document}
